\newcommand{\mathsym}[1]{{}}
\newcommand{\thmref}[1]{Theorem~\ref{#1}}
\newcommand{\eqnref}[1]{Equation~(\ref{#1})}
\newcommand{\figref}[1]{Figure~\ref{#1}}
\newtheorem{theorem}{Theorem}[section]
\theoremstyle{definition}
\newcommand{\ga}{\Gamma}
\newcommand{\vv}[1]{V(#1)}
\newcommand{\va}{\upsilon}
\def\can{{\mathop{\rm can}}}
\def\<{\langle }
\def\>{\rangle }
\newcommand{\secref}[1]{\S\ref{#1}}
\begin{document}

\title[Effective Resistances and Invariants of Ladder Graphs]{Effective Resistances, Kirchhoff index and Admissible Invariants of Ladder Graphs}

\author{Zubeyir Cinkir}
\address{Zubeyir Cinkir\\
Department of Industrial Engineering\\
Abdullah G\"{u}l University\\
Kayseri\\
TURKEY.}
\email{zubeyirc@gmail.com}



\keywords{Ladder graph, effective resistance, Kirchhoff index, admissible invariants}

\begin{abstract}
We explicitly compute the effective resistances between any two vertices of a ladder graph by using circuit reductions. Using our findings, we obtain explicit formulas for Kirchhoff index and admissible invariants of a ladder graph considering it as a model of a metrized graph.
Comparing our formula for Kirchhoff index and previous results in literature, we obtain an explicit sum formula involving trigonometric functions. We also expressed our formulas in terms of certain generalized Fibonacci numbers that are the values of the Chebyshev polynomials of the second kind at $2$.
\end{abstract}

\maketitle

\section{Introduction}\label{sec introduction}

\begin{floatingfigure}[r]{2.4 in}
\begin{center}
\includegraphics[scale=0.55]{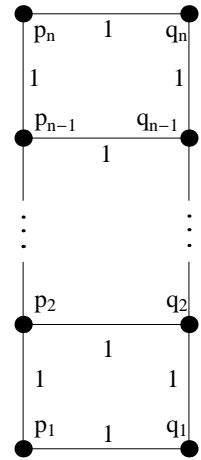}
\end{center}
\caption{Ladder graph $L_n$ with $2n$ vertices.} \label{fig laddergraphn}
\end{floatingfigure}
A ladder graph $L_n$ is a planar graph that looks like a ladder with $n$ rungs as shown in \figref{fig laddergraphn}.
It has $2n$ vertices and $3n-2$ edges. Each of its edges has length $1$, so the total length of $L_n$ is $\ell(L_n):=3n-2$.
We label the vertices on the right and left as $\{ q_1, \, q_2, \, \cdots, q_n \}$ and $\{ p_1, \, p_2, \, \cdots, p_n \}$, respectively.

One can consider $L_n$ as an electrical network in which the resistances along edges are given by the corresponding edge lengths.
For the ladder graph $L_n$, Kirchhoff index and resistance values between vertices are studied in \cite{CEM} by using the spectral properties of the discrete Laplacian of $L_n$, and closed form formulas are obtained in terms of Chebyshev polynomials.

In this paper, we obtained explicit formulas for Kirchhoff index and resistances between vertices of $L_n$ with a rather elementary method.
Namely, we used circuit reductions and solved a number of recurrence relations. Moreover, by considering $L_n$ as a model of a metrized graph, we derived explicit formulas for its admissible invariants considered in \cite{C2}, \cite{C3}, \cite{C4}, \cite{Zh} and the references therein. At the end, we expressed these formulas in terms of a sequence of generalized Fibonacci numbers $G_n$ defined by $G_{n+2}=4G_{n+1}-G_n$ if $n \geq 2$, $G_1=1$ and $G_0=0$.
The number $G_n$ is known to be the number of spanning trees in $L_n$, and that $G_n=U_{n-1}(2)$, where $U_n(x)$ is the Chebyshev polynomial of the second kind.

Among other things, we showed that the Kirchhoff index of $L_n$ satisfies the following equalities for each positive integer $n$ (see \thmref{thm Kirchhoff index} and \eqnref{eqn KI Ladder II} below):
\begin{equation*}\label{eqn KI Ladder II0}
\begin{split}
Kf(L_n) &=\frac{n^3}{3}+ \frac{n^2 G_{2n}}{6 G_{n}^2}\\
&=\frac{n^3}{3}-\frac{n^2}{\sqrt{3}}\Big[ 1- \frac{2}{1-(2-\sqrt{3})^{2n}}\Big].
\end{split}
\end{equation*}
and we derived the following trigonometric sum formulas (see \eqnref{eqn KI Ladder II} and \eqnref{eqn KI Ladder IV} below):
\begin{equation*}\label{eqn KI Ladder IV0}
\begin{split}
\sum_{k=0}^{n-1} \frac{1}{1+2 \sin^2{(\frac{k \pi}{2n})}}=\frac{1}{3}+ \frac{n G_{2n}}{6 G_{n}^2} \qquad \text{and } \quad
 \sum_{k=1}^{n-1} \frac{1}{ \sin^2{(\frac{k \pi}{2n})}}=\frac{2(n^2-1)}{3}.
\end{split}
\end{equation*}

The resistance values on Wheel and Fan graphs are expressed in terms of generalized Fibonacci numbers in \cite{BG}. Our findings for resistance values on a Ladder graph are analogues of those results on Wheel and Fan graphs.


\section{Resistances between any pairs of vertices in $L_n$}\label{sec resistances}

Let $r(p,q)$ be the effective resistance between the vertices $p$ and $q$ in $L_n$. We also use the notation $r_{L_n}(p,q)$ for this value to emphasize the graph the resistance being computed in.
In this section, we find explicit formula of $r(p,q)$ for every pair of vertices $p$ and $q$ of $L_n$. Using the symmetry of the graph $L_n$, for all $i, \, j \in \{1,\, 2, \, \cdots, n \}$ we have
\begin{equation}\label{eqn symmetry}
\begin{split}
r(p_i,p_j)=r(q_i,q_j), \quad \text{and} \quad  r(p_i,q_j)=r(q_i,p_j).
\end{split}
\end{equation}
First, we compute
effective resistances between the end vertices $p_1$, $p_n$, $q_1$ and $q_n$. Set
$x_n:=r_{L_n}(p_n,p_1)$, $y_n:=r_{L_n}(p_n,q_1)$ and $z_n:=r_{L_n}(p_n,q_n)$.

\begin{floatingfigure}[r]{3.7 in}
\begin{center}
\includegraphics[scale=0.6]{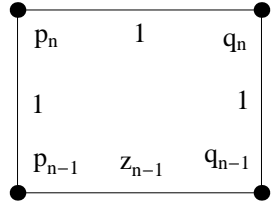}
\end{center}
\caption{Ladder graph $L_n$ with circuit reduction of $L_{n-1}$ with respect to $p_{n-1}$ and $q_{n-1}$, where $n \geq 2$.} \label{fig laddergraph2}
\end{floatingfigure}
Suppose we make circuit reduction of $L_{n-1}$ with respect to the vertices $p_{n-1}$ and $q_{n-1}$. Since we obtain $L_n$ by adding the vertices $p_n$ and $q_n$, and the three edges with end points $\{ p_{n-1}, p_{n} \}$, $\{ p_{n}, q_{n} \}$ and $\{ q_{n}, q_{n-1} \}$, we have the circuit reduction of $L_n$ as shown in \figref{fig laddergraph2}. Now, using the parallel circuit reduction in this graph, we can express $z_n$ in terms of $z_{n-1}$. This gives us the following recurrence relation:
\begin{equation}\label{eqn zn recurrence}
\begin{split}
z_n &= \frac{z_{n-1}+2}{z_{n-1}+3}, \quad \text{for all $n \geq 2$}.\\
z_1 &=1.
\end{split}
\end{equation}
Now, we use Mathematica \cite{MMA} to solve this recurrence relation.
This gives
\begin{equation}\label{eqn zn recurrence2}
\begin{split}
z_n = -1-\sqrt{3}+\frac{2 \sqrt{3}}{1-(2-\sqrt{3})^{2n}}, \quad \text{for all $n \geq 1$},
\end{split}
\end{equation}
which indeed the solution of \eqnref{eqn zn recurrence}. In particular, we have
$z_1=1$, $z_2=\frac{3}{4}$, $z_3=\frac{11}{15}$, $z_4=\frac{41}{56}$, $z_5=\frac{153}{209}$, $z_6=\frac{571}{780}$.

Other equivalent forms of $z_n$ can be given as follows:
\begin{equation}\label{eqn zn others}
\begin{split}
z_n=-1-\sqrt{3}+\frac{2 \sqrt{3}(2+\sqrt{3})^{n}}{(2+\sqrt{3})^{n}-(2-\sqrt{3})^{n}}, \quad \text{or} \quad
z_n=-1-\sqrt{3}\coth \big(n \ln(2-\sqrt{3}) \big),
\end{split}
\end{equation}
where $\coth$ is the hyperbolic cotangent function.
Note that $(2-\sqrt{3})(2+\sqrt{3})=1$.

We can rewrite \eqnref{eqn zn recurrence} in the following form:
$$z_n=\frac{1}{1+\frac{1}{2+z_{n-1}}},$$
and if we use this equality to express $z_{n-1}$ in terms of $z_{n-2}$ and substitute it in this equality, we obtain
$$z_n=\frac{1}{1+\frac{1}{2+\frac{1}{1+\frac{1}{2+z_{n-2}}}}}.$$
We can repeat this process to express $z_n$ in terms of $z_k$ for any positive integer $k <n$. Since $0< z_n <1$ for each integer $n \geq 2$ and $z_n$ is decreasing by \eqnref{eqn zn recurrence}, we notice that $z_n$'s must be part of the convergents of the number with continued fraction expansion $[ 0,1,2,1,2,1,2,\cdots ]$. On the other hand, this is nothing but the every other terms in the continued fraction expansion of $\sqrt{3}-1$. Probabilistic explanation of these facts via spanning trees can be found in \cite[page 11]{LP}.

This kind of circuit reduction technique that we used to find $z_n$ was used in the case of infinite ladder in \cite[Chapter 22-Section 6]{FLS}.

Our next aim is to find explicit formulas for $x_n$ and $y_n$ as we did for $z_n$.

\begin{floatingfigure}[r]{3.2 in}
\begin{center}
\includegraphics[scale=0.7]{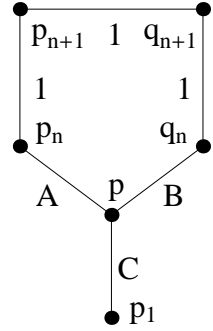}
\end{center}
\caption{Ladder graph $L_{n+1}$ with circuit reduction of $L_{n}$ with respect to $p_{n}$, $q_{n}$ and $p_1$, where $n \geq 1$.} \label{fig laddergraph3}
\end{floatingfigure}

Now, suppose $n \geq 1$ and we make circuit reduction of the subgraph $L_{n}$ of $L_{n+1}$ with respect to the vertices $p_{n}$, $q_{n}$ and $p_1$. That is, the part $L_{n}$ in $L_{n+1}$ is reduced to a $Y$-shaped graph with the outer vertices $p_{n}$, $q_{n}$ and $p_1$, and having the effective resistances $A$, $B$ and $C$ between the end points of its edges. This is illustrated in \figref{fig laddergraph3}. Then we have
$B+C=y_n$,  $A+C=x_n$ and   $A+B=z_n.$
Solving these gives
$A=\frac{x_n -y_n +z_n}{2}$, $B=\frac{-x_n +y_n +z_n}{2}$ and $C=\frac{x_n +y_n -z_n}{2}.$
On the other hand, using parallel and series circuit reductions in \figref{fig laddergraph3} we obtain
$x_{n+1}=\frac{(A+1)(B+2)}{z_n +3}+C$ and $y_{n+1}=\frac{(B+1)(A+2)}{z_n +3}+C$. Therefore,
\begin{equation}\label{eqn recurrence xnyn}
\begin{split}
x_{n+1} &= \frac{(x_n-y_n+z_n+2)(-x_n+y_n+z_n+4)}{4(z_n+3)}+\frac{x_n+y_n-z_n}{2}, \quad \text{if $n \geq 1$}.\\
y_{n+1} &= \frac{(-x_n+y_n+z_n+2)(x_n-y_n+z_n+4)}{4(z_n+3)}+\frac{x_n+y_n-z_n}{2}, \quad \text{if $n \geq 1$}.\\
 x_1 &=0 \qquad \text{and } \qquad y_1=1.
\end{split}
\end{equation}
If we subtract the second equation from the first one, we obtain $x_{n+1}-y_{n+1}=\frac{x_n -y_n}{z_n +3}$. Now, we set $t_n:=x_n-y_n$ to obtain
\begin{equation}\label{eqn tn}
\begin{split}
t_{n+1}=\frac{t_n}{z_n +3}, \qquad \text{if $n \geq 1$ and $t_1=-1$}.
\end{split}
\end{equation}
This can be rewritten as follows
\begin{equation}\label{eqn tn2}
\begin{split}
t_{n+1}=-\prod_{k=1}^n \frac{1}{z_k +3}.
\end{split}
\end{equation}
Since $\frac{1}{z_k +3}=\frac{\left(2+\sqrt{3}\right)^k-\left(2-\sqrt{3}\right)^k}{\left(2+\sqrt{3}\right)^{k+1}-\left(2-\sqrt{3}\right)^{k+1}}$ by using the first equality in  (\ref{eqn zn others}) and doing some algebra, we see that the product in \eqnref{eqn tn2} can be simplified. This gives
\begin{equation}\label{eqn tn3}
\begin{split}
t_{n}=\frac{-2\sqrt{3}}{(2+\sqrt{3})^n-(2-\sqrt{3})^n}, \qquad \text{for every $n \geq 1$},
\end{split}
\end{equation}
which can also be written as $t_n=-\frac{2 \sqrt{3} \left(2-\sqrt{3}\right)^n}{1-\left(2-\sqrt{3}\right)^{2 n}}$ for all $n \geq 1$.
Now, we turn our attention back to the solutions of $x_n$ and $y_n$. Using $x_n =t_n +y_n$, \eqnref{eqn zn recurrence2}, \eqnref{eqn tn3} and doing some algebra, the second equality in (\ref{eqn recurrence xnyn}) becomes
\begin{equation}\label{eqn yn}
\begin{split}
y_{n+1}=y_n + \frac{\sqrt{3}}{1-\left(2-\sqrt{3}\right)^{n+1}}-\frac{\sqrt{3}}{1-\left(2-\sqrt{3}\right)^n}+\frac{1}{2}, \qquad \text{for all $n \geq 1$ and $y_1=1$.}
\end{split}
\end{equation}
This can be solved as follows:
\begin{equation}\label{eqn yn2}
\begin{split}
y_n=\frac{n-2-\sqrt{3}}{2}+\frac{ \sqrt{3}}{1-\left(2-\sqrt{3}\right)^n}, \qquad \text{for all $n \geq 1$.}
\end{split}
\end{equation}
Using \eqnref{eqn yn2}, \eqnref{eqn tn3} and the fact that $x_n =t_n +y_n$, we obtain
\begin{equation}\label{eqn xn}
\begin{split}
x_n=\frac{n-2-\sqrt{3}}{2}+\frac{ \sqrt{3}}{1+\left(2-\sqrt{3}\right)^n}, \qquad \text{for all $n \geq 1$.}
\end{split}
\end{equation}
Note that for all $n \geq 1$ we have
\begin{equation}\label{eqn xnynzn}
\begin{split}
x_n+y_n-z_n&=n-1,\\
x_n-y_n+z_n&=-1-\sqrt{3}+\frac{2\sqrt{3}}{1+(2-\sqrt{3})^n},\\
-x_n+y_n+z_n&=-1-\sqrt{3}+\frac{2\sqrt{3}}{1-(2-\sqrt{3})^n}.
\end{split}
\end{equation}

Next, we obtain formulas for $r_{L_n}(p_n,p_i)$, $r_{L_n}(p_n,q_{i})$ and $r_{L_n}(p_i , q_i)$, where $n > i >1$.
We can consider $L_n$ as the union of three graphs; the upper part of $p_{i+1}$ and $q_{i+1}$, the lower part of $p_{i}$ and $q_{i}$,
and the middle part consisting of $p_{i+1}$, $q_{i+1}$, $p_{i}$ and $q_{i}$. These graphs are illustrated in \figref{fig laddergraph8}.
Note that the graphs in the upper and the lower parts are nothing but the graphs $L_{n-i}$ and $L_i$, respectively.
We make the circuit reduction of the upper part with respect to $p_n$, $p_{i+1}$ and $q_{i+1}$ to obtain a $Y$-shaped graph having the resistances $M$, $N$ and $K$ along its edges. We make the circuit reduction of the lower part with respect to $p_i$ and $q_i$.
The resistance between $p_i$ and $q_i$ in the lower part, $r_{L_i}(p_i,q_i)$, is $z_i$ by definition. Now, we have
\begin{equation}\label{eqn rpnpi}
\begin{split}
M+N =x_{n-i}, \quad
M+K =y_{n-i}, \quad
N+K =z_{n-i}.
\end{split}
\end{equation}

\begin{figure}
\centering
\includegraphics[scale=0.6]{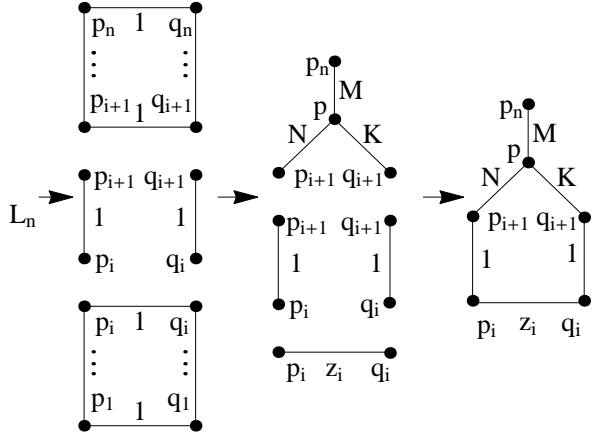} \caption{$L_n$ and circuit reductions to find $r_{L_n}(p_n , p_i)$, $r_{L_n}(q_n , p_i)$ and $r_{L_n}(p_i , q_i)$.} \label{fig laddergraph8}
\end{figure}

Solving these for $M$, $N$ and $K$, and using Equations (\ref{eqn xnynzn}) give
\begin{equation}\label{eqn rpnpi2}
\begin{split}
M &= \frac{x_{n-i}+y_{n-i}-z_{n-i}}{2}=\frac{n-i-1}{2},\\
N &= \frac{x_{n-i}-y_{n-i}+z_{n-i}}{2}=\frac{-1-\sqrt{3}}{2}+\frac{\sqrt{3}}{1+(2-\sqrt{3})^{n-i}},\\
K &= \frac{-x_{n-i}+y_{n-i}+z_{n-i}}{2}=\frac{-1-\sqrt{3}}{2}+\frac{\sqrt{3}}{1-(2-\sqrt{3})^{n-i}}.
\end{split}
\end{equation}
By making parallel and series circuit reductions in the graph at the last column of \figref{fig laddergraph8}, for each $i$ with $n >i>1$, we obtain
\begin{equation}\label{eqn rpnpi3}
\begin{split}
r_{L_n}(p_n, p_i) &= \frac{(N+1)(K+z_i+1)}{z_{n-i}+z_i+2}+M,\\
r_{L_n}(p_n, q_i) &= \frac{(K+1)(N+z_i+1)}{z_{n-i}+z_i+2}+M,\\
r_{L_n}(p_i, q_i) &= \frac{z_i(z_{n-i}+2)}{z_{n-i}+z_i+2}.
\end{split}
\end{equation}
We set
$$
\alpha=2-\sqrt{3}.
$$
Using \eqnref{eqn zn recurrence2} and Equations (\ref{eqn rpnpi2}),  we can rewrite Equations in (\ref{eqn rpnpi3}) as follows:
\begin{equation}\label{eqn rpnpi4}
\begin{split}
r_{L_n}(p_n, p_i) &= \frac{n-i}{2}+\frac{(1-\alpha^{n-i})}{4 \sqrt{3}(1-\alpha^{2n})} \big(2-2\alpha^{n+i}-\alpha^{n+i-1}-\alpha^{n-i+1}+\alpha^{2i-1}+\alpha \big), \\
r_{L_n}(p_n, q_i) &= \frac{n-i}{2}+\frac{(1+\alpha^{n-i})}{4 \sqrt{3}(1-\alpha^{2n})} \big(2+2\alpha^{n+i}+\alpha^{n+i-1}+\alpha^{n-i+1}+\alpha^{2i-1}+\alpha \big),\\
r_{L_n}(p_i, q_i) &=\frac{(1+ \alpha^{2n-2i+1})(1+ \alpha^{2i-1})}{\sqrt{3} (1-\alpha^{2n})}.
\end{split}
\end{equation}
Although we obtained formulas in $(\ref{eqn rpnpi4})$ under the condition $n>i>1$, whenever $n=i$ or $i=1$ these formulas are consistent with the ones given in Equations $(\ref{eqn zn recurrence2})$, $(\ref{eqn xn})$ and $(\ref{eqn yn2})$. Therefore, formulas in $(\ref{eqn rpnpi4})$ are valid for each integer $n$ and $i$ satisfying $n \geq i \geq 1$.

In the remaining part of this section, we obtain formulas for
$$r_{L_n}(p_i,q_j) \quad \text{and} \quad r_{L_n}(p_i,p_{j}), \quad \text{where $n > i \geq j \geq 1$}.$$
This time, we consider $L_n$ as the union of two graphs; upper and lower parts of $p_{i}$ and $q_{i}$ as illustrated in the second stage in \figref{fig ladderresij3}.
Note that the graph $L_{n-i}$ appear in the upper part, and the lower part is nothing but $L_i$ . Next, we can apply circuit reduction to reduce $L_{n-i}$ into a line with the end points $p_{i+1}$ and $q_{i+1}$, and this line has the resistance $r_{L_{n-i}}(p_{i+1},q_{i+1})=z_{n-i}$ between its end points.
For the lower part, we apply circuit reduction to $L_i$ fixing its points $p_i$, $q_i$ and $p_j$ so that we obtain a $Y$-shaped graph having the resistances $D$, $E$ and $F$ along its edges. These reductions are illustrated in the third stage in \figref{fig ladderresij3}, and the relations between $D$, $E$ and $F$ are given in Equations $(\ref{eqn rpipj})$. Finally, we obtain the reduced graph as in the last stage in \figref{fig ladderresij3}.

\begin{figure}
\centering
\includegraphics[scale=0.45]{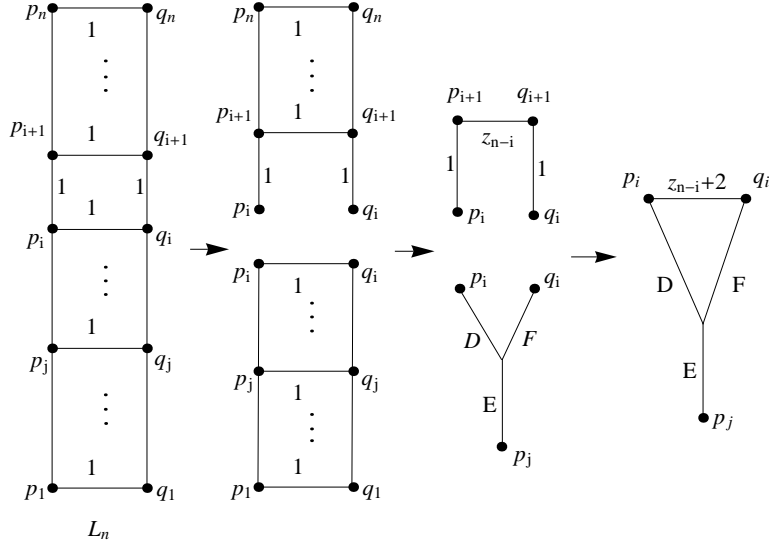} \caption{Circuit reductions applied to $L_n$ to find $r_{L_n}(p_i , p_j)$ and $r_{L_n}(p_i, q_j)$.} \label{fig ladderresij3}
\end{figure}

\begin{equation}\label{eqn rpipj}
\begin{split}
D+E =r_{L_i}(p_i,p_j), \quad
D+F =r_{L_i}(p_i,q_i)=z_{i}, \quad
E+F =r_{L_i}(q_i,p_j).
\end{split}
\end{equation}
Solving these for $D$, $E$ and $F$ gives
\begin{equation}\label{eqn rpipj2}
\begin{split}
D &= \frac{r_{L_i}(p_i,p_j)+z_{i}-r_{L_i}(q_i,p_j)}{2},\\
E &=\frac{r_{L_i}(p_i,p_j)-z_{i}+r_{L_i}(q_i,p_j)}{2},\\
F &= \frac{-r_{L_i}(p_i,p_j)+z_{i}+r_{L_i}(q_i,p_j)}{2}.
\end{split}
\end{equation}

By making parallel and series circuit reductions in the graph at the last column of \figref{fig ladderresij3}, for each $i$ with $n > i \geq j \geq 1$, we obtain
\begin{equation}\label{eqn rpipj3}
\begin{split}
r_{L_n}(p_i , p_j) &= \frac{D(z_{n-i}+F+2)}{z_{n-i}+z_i+2}+E,\\
r_{L_n}(q_i , p_j) &= \frac{F(z_{n-i}+D+2)}{z_{n-i}+z_i+2}+E,
\end{split}
\end{equation}
Now, we use \eqnref{eqn zn recurrence2}, Equations $(\ref{eqn rpnpi4})$, $(\ref{eqn rpipj2})$ and $(\ref{eqn rpipj3})$ and do some algebra using Mathematica \cite{MMA} to derive the following resistance values:
\begin{equation}\label{eqn rpipj4}
\begin{split}
r_{L_n}(p_i , p_j) &= \frac{i-j}{2}+\frac{(1-\alpha^{i-j})}{4 \sqrt{3}(1-\alpha^{2n})}\big( 2- \alpha^{i+j-1}+ \alpha^{2j-1}+\alpha^{2n-2i+1}(1-
\alpha^{i-j}-2\alpha^{i+j-1}) \big),\\
r_{L_n}(q_i , p_j) &= \frac{i-j}{2}+\frac{(1+\alpha^{i-j})}{4 \sqrt{3}(1-\alpha^{2n})}\big( 2+ \alpha^{i+j-1}+ \alpha^{2j-1}+\alpha^{2n-2i+1}(1+\alpha^{i-j}+2\alpha^{i+j-1}) \big).
\end{split}
\end{equation}
In spite of the fact that we obtained formulas in $(\ref{eqn rpipj4})$ under the condition $n > i \geq j \geq 1$, when $n=i$ these formulas are consistent with the ones given in Equations $(\ref{eqn rpnpi4})$. Therefore, formulas in $(\ref{eqn rpipj4})$ are valid for each integers $i$, $j$ and $n$ satisfying $n \geq i \geq j \geq 1$. That is, we can use the explicit formulas in $(\ref{eqn rpipj4})$ to find the resistances between any pair of vertices in $L_n$.

\section{Kirchhoff Index of $L_n$ }\label{sec Kirchhoff index}

In this section, we obtain an explicit formula for Kirchhoff index of $L_n$ by using our explicit formulas derived in \secref{sec resistances} for the resistances between any pairs of vertices of $L_n$. Moreover, we obtain an interesting summation formula by combining our findings and what is known in the literature about Kirchhoff index of $L_n$.

Recall that Kirchhoff index of a graph $\ga$, $Kf(\ga)$, is defined \cite{KR} as follows:
\begin{equation*}\label{eqn KIndex definition}
\begin{split}
Kf(\ga)=\frac{1}{2}\sum_{p,\, q \in \vv{\ga}}r(p,q).
\end{split}
\end{equation*}

\begin{theorem}\label{thm Kirchhoff index}
For any positive integer $n$, we have
$$
Kf(L_n)=\frac{n^3}{3}-\frac{n^2}{\sqrt{3}}\Big[ 1- \frac{2}{1-(2-\sqrt{3})^{2n}}\Big].
$$
\end{theorem}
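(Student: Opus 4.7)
The approach is to apply the definition $Kf(L_n) = \frac{1}{2}\sum_{p,q} r(p,q)$ directly and plug in the closed-form resistances derived in \secref{sec resistances}. Writing $\alpha := 2 - \sqrt{3}$ and using the symmetry $(\ref{eqn symmetry})$ together with $r_{L_n}(p_i,q_j) = r_{L_n}(p_j,q_i)$ (obtained by composing the $p \leftrightarrow q$ swap with the reversal $i \mapsto n+1-i$ of the ladder), the full double sum collapses to
\begin{equation*}
Kf(L_n) = 2 \sum_{1 \le j < i \le n} \bigl[r_{L_n}(p_i,p_j) + r_{L_n}(p_i,q_j)\bigr] + \sum_{i=1}^n r_{L_n}(p_i,q_i),
\end{equation*}
so the whole computation reduces to three summations, to be evaluated with $(\ref{eqn rpipj4})$ and the third equation of $(\ref{eqn rpnpi4})$.

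The key simplification is to combine $r_{L_n}(p_i,p_j)$ and $r_{L_n}(p_i,q_j)$ \emph{before} summing. In $(\ref{eqn rpipj4})$ these two resistances carry the complementary factors $(1-\alpha^{i-j})$ and $(1+\alpha^{i-j})$ multiplying brackets that differ only by signs on the terms involving $\alpha^{i+j-1}$ and $\alpha^{i-j}$. A straightforward expansion cancels all the asymmetric cross terms and leaves a fully symmetric expression
\begin{equation*}
r_{L_n}(p_i,p_j) + r_{L_n}(p_i,q_j) = (i-j) + \frac{2(1+\alpha^{2n}) + s_i + s_j}{2\sqrt{3}\,(1-\alpha^{2n})},
\end{equation*}
where $s_k := \alpha^{2k-1} + \alpha^{2n-2k+1}$. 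A parallel expansion of $(1+\alpha^{2n-2i+1})(1+\alpha^{2i-1})$ rewrites $r_{L_n}(p_i,q_i) = (1 + \alpha^{2n} + s_i)/\bigl(\sqrt{3}\,(1-\alpha^{2n})\bigr)$, exposing the same building block $s_i$.

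With these reductions in hand, the remaining bookkeeping is routine. The linear part contributes $\sum_{i>j}(i-j) = (n^3-n)/6$. For the $\alpha$-dependent part, each $s_k$ appears exactly $n-1$ times in $\sum_{i>j}(s_i + s_j)$. The one genuine algebraic identity needed is
\begin{equation*}
\frac{\alpha}{1-\alpha^2} = \frac{1}{2\sqrt{3}}, \qquad \text{equivalent to } \quad 1 - \alpha^2 = 2\sqrt{3}\,\alpha,
\end{equation*}
from which a geometric series gives $\sum_{k=1}^{n} s_k = (1-\alpha^{2n})/\sqrt{3}$. This cancels the troublesome denominator $(1-\alpha^{2n})$, the polynomial pieces combine to $n^3/3$, and the identity $n(n-1) + n = n^2$ fuses the two $\alpha$-dependent contributions into a single coefficient $n^2$. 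A final application of $(1+\alpha^{2n})/(1-\alpha^{2n}) = -1 + 2/(1-\alpha^{2n})$ puts the answer in the stated form. The main obstacle is nothing conceptual but the careful tracking of the many $\alpha^{\pm\text{power}}$ exponents when the two resistance formulas are added; once one observes the cancellations and the telescoping identity $1-\alpha^2 = 2\sqrt{3}\,\alpha$, every remaining step is a direct geometric-series evaluation.
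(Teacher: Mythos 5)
Your proposal is correct and follows essentially the same route as the paper: reduce $Kf(L_n)$ via the symmetry $(\ref{eqn symmetry})$ to the three sums $2\sum_{i>j}r(p_i,p_j)+2\sum_{i>j}r(p_i,q_j)+\sum_i r(p_i,q_i)$ and evaluate them with the closed forms $(\ref{eqn rpipj4})$. The only difference is that you carry out explicitly (and correctly, e.g.\ the identity $1-\alpha^2=2\sqrt{3}\,\alpha$ and the combination $2(1+\alpha^{2n})+s_i+s_j$) the algebra that the paper delegates to Mathematica.
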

\begin{proof}
With the notation of vertices as in \figref{fig laddergraphn}, using \eqnref{eqn symmetry} gives
$$Kf(L_n)= \frac{1}{2} \sum_{p,\, q \in \vv{\ga}}r(p,q)=2\sum_{1 \leq j < i \leq n}r(p_i,p_j)+
2\sum_{1 \leq j < i \leq n}r(p_i,q_j)+\sum_{i=1}^n r(p_i,q_i).
$$
Then the result follows if we use Equations $(\ref{eqn rpipj4})$ and doing some algebra \cite{MMA}.
\end{proof}
Note that the Kirchhoff index formula in \thmref{thm Kirchhoff index} can also be expressed as follows:
$$
Kf(L_n)=\frac{n^2}{3}\big[ n-\sqrt{3} \coth \big(n \ln(2-\sqrt{3}) \big) \big].
$$
The values of $Kf(L_n)$ are rational numbers. For example, its values for $1 \leq n \leq 8$ are as follows:
$1$, $5$, $\frac{71}{5}$, $\frac{214}{7}$, $\frac{11725}{209}$, $\frac{6031}{65}$, $\frac{415177}{2911}$, $\frac{140972}{679}$.

\begin{theorem}\label{thm trig sum}
For any positive integer $n$, we have
$$
\sum_{k=0}^{n-1} \frac{1}{1+2 \sin^2({\frac{\pi k}{2n}})}=\frac{n}{\sqrt{3}}\Big[ \frac{2}{1-(2-\sqrt{3})^{2n}}-1 \Big]+\frac{1}{3}.
$$
\end{theorem}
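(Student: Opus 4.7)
The plan is to evaluate $Kf(L_n)$ a second time via the spectral formula for the Kirchhoff index and then match the result against \thmref{thm Kirchhoff index}. Recall that for any connected graph $\Gamma$ on $N$ vertices with nonzero Laplacian eigenvalues $\lambda_1,\dots,\lambda_{N-1}$ one has $Kf(\Gamma)=N\sum_{i=1}^{N-1}1/\lambda_i$. Since $L_n$ is the Cartesian product of the path $P_n$ with $K_2$ (which accounts correctly for its $3n-2$ edges), its Laplacian spectrum is obtained by taking all sums $\lambda+\mu$ with $\lambda$ an eigenvalue of the Laplacian of $P_n$ and $\mu\in\{0,2\}$. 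The Laplacian eigenvalues of $P_n$ are $4\sin^2(k\pi/(2n))$ for $k=0,1,\dots,n-1$, so the $2n$ eigenvalues of $L_n$ are
\[
4\sin^2\!\bigl(k\pi/(2n)\bigr) \quad \text{and} \quad 4\sin^2\!\bigl(k\pi/(2n)\bigr)+2,\qquad k=0,\dots,n-1,
\]
with the unique zero occurring at $k=0$ in the first family.

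Substituting these into the spectral formula with $N=2n$ yields
\[
Kf(L_n)=\frac{n}{2}\sum_{k=1}^{n-1}\frac{1}{\sin^2(k\pi/(2n))}+n\sum_{k=0}^{n-1}\frac{1}{1+2\sin^2(k\pi/(2n))}.
\]
Next I would invoke the cosecant identity
\[
\sum_{k=1}^{n-1}\frac{1}{\sin^2(k\pi/(2n))}=\frac{2(n^2-1)}{3},
\]
which is the second sum formula mentioned in the introduction and which can itself be obtained by applying the spectral formula to the path $P_n$ using $Kf(P_n)=(n^3-n)/6$. This collapses the first sum on the right-hand side into $\tfrac{n(n^2-1)}{3}$.

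Finally, equating the resulting spectral expression for $Kf(L_n)$ with the closed form in \thmref{thm Kirchhoff index} gives
\[
\frac{n(n^2-1)}{3}+n\sum_{k=0}^{n-1}\frac{1}{1+2\sin^2(k\pi/(2n))}=\frac{n^3}{3}-\frac{n^2}{\sqrt{3}}\Bigl[1-\frac{2}{1-(2-\sqrt{3})^{2n}}\Bigr].
\]
The cubic terms on the two sides cancel up to a surviving $n/3$, so after dividing through by $n$ and rearranging one obtains exactly the asserted identity. The main --- and really only --- obstacle is a matter of conventions: one must be careful to use the Laplacian eigenvalues of the \emph{path} $P_n$ (rather than the cycle), and to use the standard fact that the Laplacian spectrum of a Cartesian product is the pairwise sum of the factor spectra. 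Neither is delicate, and once these two inputs are in hand the remaining bookkeeping is routine.
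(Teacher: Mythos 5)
Your proposal is correct and follows the same essential strategy as the paper: compute $Kf(L_n)$ a second time via the Laplacian eigenvalues and compare with the closed form of \thmref{thm Kirchhoff index}. The only difference is that the paper simply cites the identity
$Kf(L_n)=\tfrac{n(n^2-1)}{3}+n\sum_{k=0}^{n-1}\bigl(1+2\sin^2(\tfrac{k\pi}{2n})\bigr)^{-1}$
from Yang--Zhang (and Carmona--Encinas--Mitjana), whereas you rederive it from the Cartesian-product structure $L_n=P_n\,\square\,K_2$ together with the cosecant sum $\sum_{k=1}^{n-1}\sin^{-2}(\tfrac{k\pi}{2n})=\tfrac{2(n^2-1)}{3}$. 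One point worth flagging: the paper obtains that cosecant identity only later (as a \emph{consequence} of the two Kirchhoff-index formulas), so using it as an input could look circular; your independent derivation from $Kf(P_n)=(n^3-n)/6$ applied to the path's spectrum closes that loop correctly, and makes the argument self-contained where the paper's is citation-based.
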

\begin{proof}
We recall the following result \cite[Theorem 4.1]{YZ} obtained by using the relation between the Kirchhoff index and the eigenvalues of the  discrete Laplacian matrix of $L_n$.
\begin{equation}\label{eqn KI Ladder}
\begin{split}
Kf(L_n)=\frac{n(n^2-1)}{3}+n \sum_{k=0}^{n-1} \frac{1}{1+2 \sin^2({\frac{\pi k}{2n}})}.
\end{split}
\end{equation}
Note that \eqnref{eqn KI Ladder} is also a particular case of \cite[Corollary 12]{CEM} (namely, when $c=1$).
Then the proof is completed by combining \eqnref{eqn KI Ladder} and the result in \thmref{thm Kirchhoff index}.
\end{proof}
Since $(2-\sqrt{3})^2 \approx  \, 0.071796$, for large values of $n$ we have $Kf(L_n) \approx \frac{n^2(n+\sqrt{3})}{3}$ by \thmref{thm Kirchhoff index}.

\begin{figure}
\centering
\includegraphics[scale=1]{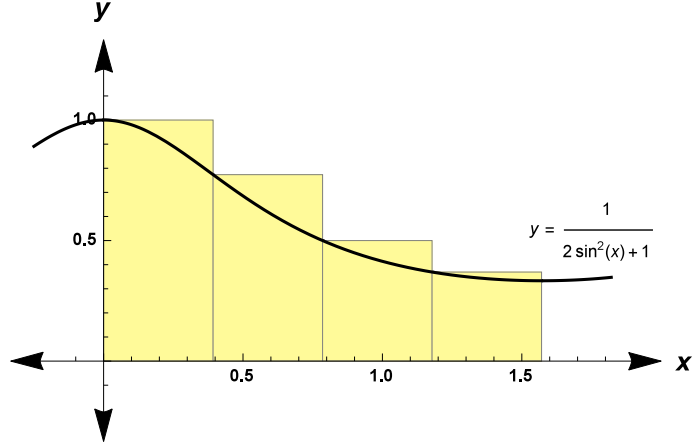} \caption{A Riemann sum approximation using left points.} \label{fig LeftRS4}
\end{figure}

Next, we give a geometric interpretation of the summation that appears in \eqnref{eqn KI Ladder}. Let $P=\{0, \, \frac{\pi}{2 n}, \, \frac{2\pi}{2 n}, \, \frac{3\pi}{2 n}, \, \cdots, \, \frac{(n-1)\pi}{2 n}, \, \frac{n\pi}{2 n}=\frac{\pi}{2}  \}$ be a partition of the interval $[0,\frac{\pi}{2}]$. Then the Riemann sum of $f(x)=\frac{1}{1+2 \sin^2(x)}$ on $[0,\frac{\pi}{2}]$ that uses left points in each subinterval is nothing but
$$
\frac{\pi}{2 n} \sum_{k=0}^{n-1} \frac{1}{1+2 \sin^2(\frac{\pi k}{2n})}.
$$
\figref{fig LeftRS4} illustrates the case with $n=4$ subintervals.
Note that
$$\displaystyle \lim_{n \rightarrow \infty} \frac{\pi}{2 n} \sum_{k=0}^{n-1} \frac{1}{1+2 \sin^2({\frac{\pi k}{2n}})} = \int_{0}^{\frac{\pi}{2}} \frac{dx}{1+2 \sin^2 x}=\frac{\pi}{2 \sqrt{3}},$$
which is consistent with our findings in \thmref{thm trig sum}.

\section{Admissible Invariants of $L_n$ }\label{sec admissible constants}

In this section, we give explicit formulas for the following admissible invariants of $L_n$:
$\tau(L_n)$, $\theta(L_n)$, $\lambda(L_n)$, $\varphi(L_n)$ and $\epsilon(L_n)$ when $L_n$ is considered as a model of a metrized graph.
These invariants were studied in \cite{C2}, \cite{C3}, \cite{C4}, \cite{Zh} and the references therein.

\begin{theorem}\label{thm theta}
For any positive integer $n$, we have
$$\theta(L_n)=\frac{2(n-2)}{3} \Big[ n^2-4n+10-(n-6) \sqrt{3}(1-\frac{2}{1-(2-\sqrt{3})^{2n}})  \Big].$$
\end{theorem}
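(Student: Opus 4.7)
The plan is to imitate the strategy that worked for \thmref{thm Kirchhoff index}: reduce $\theta(L_n)$ to a finite sum of quantities already known from \secref{sec resistances}, and then simplify. The admissible invariant $\theta$ of a metrized graph $\ga$, as developed in \cite{C2}, \cite{C3}, \cite{C4}, \cite{Zh}, admits an edge-by-edge representation of the form
\begin{equation*}
\theta(\ga) = \sum_{e_i \in E(\ga)} \Phi\bigl(L_{e_i}, R_{e_i}\bigr),
\end{equation*}
where $L_{e_i}$ is the length of the edge $e_i$, $R_{e_i}$ denotes the effective resistance between the endpoints of $e_i$ measured in $\ga \setminus e_i$, and $\Phi$ is the explicit rational function recalled in those references. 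Since every edge of $L_n$ has unit length, each summand depends only on the single variable $R_{e_i}$.

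Because an edge $e_i$ of length $1$ sits in parallel with $R_{e_i}$ across its two endpoints $p, q$, we have $r_{L_n}(p,q) = R_{e_i}/(1+R_{e_i})$, and therefore $R_{e_i} = r_{L_n}(p,q)/(1 - r_{L_n}(p,q))$. Substituting the closed-form expressions for $r_{L_n}(p_i,q_i)$, $r_{L_n}(p_i,p_{i+1})$, and $r_{L_n}(q_i,q_{i+1})$ from \secref{sec resistances} (equivalently, the relevant specializations of Equations $(\ref{eqn rpnpi4})$ and $(\ref{eqn rpipj4})$) turns each $R_{e_i}$ into an explicit rational function of $\alpha = 2-\sqrt{3}$ and the index $i$.

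Next I would partition $E(L_n)$ into its three natural edge classes: the $n$ rungs $\{p_i, q_i\}$, the $n-1$ left uprights $\{p_i, p_{i+1}\}$, and the $n-1$ right uprights $\{q_i, q_{i+1}\}$. The reflection symmetry \eqnref{eqn symmetry} identifies the contributions of the left and right uprights, so only two sums remain to evaluate. Each such sum splits (after a partial-fraction decomposition in the variables $\alpha^{2i-1}$ and $\alpha^{2n-2i+1}$) into finite geometric series of ratio $\alpha^2$, which can be summed via $\sum_{i=0}^{n-1} \alpha^{2i} = (1-\alpha^{2n})/(1-\alpha^2)$ and the factorization $1-\alpha^{2n} = (1-\alpha^n)(1+\alpha^n)$. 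Collecting the three contributions and using the identities $\alpha(2+\sqrt{3}) = 1$ and $(2-\sqrt{3}) + (2+\sqrt{3}) = 4$ should produce the formula in the statement.

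The main obstacle is the algebraic bookkeeping, not the conceptual setup. Each edge class yields a sum of several rational terms whose denominators involve the factors $1 \pm \alpha^{2i-1}$, $1 \pm \alpha^{2n-2i+1}$, and $1-\alpha^{2n}$, so one must carefully separate the parts that telescope or cancel against each other from the parts that contribute a genuine polynomial-in-$n$ term and those that contribute the $1/(1-(2-\sqrt{3})^{2n})$ term. Exactly as in \thmref{thm Kirchhoff index}, I would offload this reorganization to Mathematica \cite{MMA} to verify the final simplification to $\frac{2(n-2)}{3}[n^2-4n+10-(n-6)\sqrt{3}(1-\frac{2}{1-(2-\sqrt{3})^{2n}})]$.
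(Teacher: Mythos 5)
Your argument rests on the claim that $\theta$ admits an edge-by-edge representation $\theta(\ga)=\sum_{e_i\in E(\ga)}\Phi(L_{e_i},R_{e_i})$, with $\Phi$ a rational function of the edge length and the complementary resistance alone. No such formula is stated in \cite{C2}, \cite{C3}, \cite{C4} or \cite{Zh}, you never identify $\Phi$, and there is no reason to believe it exists: by definition $\theta(\ga)=\sum_{p,q\in \vv{\ga}}(\va(p)-2)(\va(q)-2)\,r(p,q)$ is a weighted sum over \emph{all} pairs of vertices, so in $L_n$ it contains long-range terms such as $r(p_2,q_{n-1})$ between non-adjacent degree-three vertices, which a sum of edge-local quantities $\Phi(1,R_{e_i})$ cannot capture. (The invariant that does admit an edge-indexed formula is $\tau$, and even there the second sum in \eqnref{eqn tau general} involves resistances to a fixed base point rather than only $R_{e_i}$.) Since the foundational identity is unavailable, the subsequent steps --- recovering $R_{e_i}$ from $r_{L_n}(p,q)$ via the parallel rule, splitting into rungs and uprights, and summing geometric series in $\alpha^2$ --- have nothing to act on. This is a genuine gap, not a bookkeeping issue.

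The workable route, and the one the paper takes, exploits the fact that the weight $\va(p)-2$ vanishes exactly at the four degree-two corner vertices $p_1,p_n,q_1,q_n$ and equals $1$ at every other vertex of $L_n$. Hence $\theta(L_n)$ is the sum of $r(p,q)$ over ordered pairs of non-corner vertices, and inclusion--exclusion over the corners together with the symmetry \eqnref{eqn symmetry} gives
\begin{equation*}
2\,Kf(L_n)-\theta(L_n)=8\sum_{p\in \vv{L_n}}r(p,p_n)-4\big[r(p_n,p_1)+r(p_n,q_1)+r(p_n,q_n)\big].
\end{equation*}
Everything on the right is already in hand: $Kf(L_n)$ from \thmref{thm Kirchhoff index}, the corner-to-corner resistances $x_n$, $y_n$, $z_n$, and the single sum $\sum_{i}\big(r(p_i,p_n)+r(q_i,p_n)\big)$ from Equations $(\ref{eqn rpnpi4})$. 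If you wish to keep an edge-based computation, you would first have to prove a decomposition theorem for $\theta$, which is a harder problem than the one at hand.
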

\begin{proof}
By definition \cite[Section 4]{C2}, $\theta(L_n)=\sum_{p, \, q \in \vv{L_n}} (\va(p)-2)(\va(q)-2)r(p,q)$, where $\va(p)$ is the degree of the vertex $p$. Therefore,
\begin{equation*}\label{eqn theta and Kirchhoff}
\begin{split}
2 Kf(L_n) - \theta(L_n) & =8 \sum_{p \,  \in  \vv{L_n}}r(p,p_n)-4\big[ r(p_n,p_1)+r(p_n,q_1)+r(p_n,q_n) \big] \\
&=8 \sum_{i=2}^{n-1}\big( r(p_i,p_n)+r(q_i,p_n) \big)+4\big[ r(p_n,p_1)+r(p_n,q_1)+r(p_n,q_n) \big],
\end{split}
\end{equation*}
where the vertices $p_i$ and $q_i$ with $i \in \{ 1, \, \dots, \, n \}$ are as in \figref{fig laddergraphn}. Thus, the result follows by using this equality, Equations $(\ref{eqn rpipj4})$, \thmref{thm Kirchhoff index} and doing some algebra \cite{MMA}.
\end{proof}

\begin{theorem}\label{thm tau}
For any positive integer $n$, we have
$$\tau(L_n)=\frac{9n -20}{36}+ \frac{n-6}{6 \sqrt{3}} \Big[ 1-\frac{2}{1-(2-\sqrt{3})^{2n}} \Big].$$
\end{theorem}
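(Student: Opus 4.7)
The plan is to compute $\tau(L_n)$ via the edge-by-edge summation formula for the $\tau$-invariant of a metrized graph, which is the standard tool used in \cite{C2}, \cite{C3}, \cite{C4} and \cite{Zh}. If $\Gamma$ has edges $e_1,\ldots,e_m$ with lengths $L_1,\ldots,L_m$ and $R_i$ denotes the effective resistance in $\Gamma\setminus e_i$ between the two endpoints of $e_i$, then $\tau(\Gamma)$ is a closed-form rational sum in the $L_i$ and $R_i$. The values of $R_i$ can be read off from the in-graph resistances already computed in Section \ref{sec resistances} via the parallel-resistor identity: for an edge $e=\{u,v\}$ of length $L$,
$$R \;=\; \frac{L\cdot r_\Gamma(u,v)}{L - r_\Gamma(u,v)}.$$

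For $L_n$ every edge has length $1$, and by the symmetries in \eqnref{eqn symmetry} the $3n-2$ edges split into two effective classes: the $n$ horizontal rungs $\{p_i,q_i\}$, for which $r_{L_n}(p_i,q_i)$ is given by the last line of \eqnref{eqn rpnpi4}; and the $2(n-1)$ vertical rails, where by the left--right symmetry the two rails at level $i$ contribute equally and $r_{L_n}(p_i,p_{i+1})$ comes from specializing \eqnref{eqn rpipj4} with $j=i-1$. Substituting these values into the edge-by-edge formula expresses $\tau(L_n)$ in the form $\sum_{i=1}^{n}f_1(i)+2\sum_{i=1}^{n-1}f_2(i)$, where $f_1$ and $f_2$ are rational functions of $\alpha^{i}$ and $\alpha^{n-i}$ with $\alpha=2-\sqrt{3}$.

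The final step is to recognize these $i$-sums as geometric or telescoping series, which admit closed forms because $\sum_i\alpha^{i}$ and $\sum_i\alpha^{2i}$ collapse and because the identity $\alpha(2+\sqrt{3})=1$ lets cross-terms of the shape $\alpha^{n-i}/(1\pm\alpha^{2i-1})$ be reorganized. The principal obstacle is purely algebraic bookkeeping rather than conceptual: after substitution the individual edge contributions are unwieldy, and collapsing them into the compact shape $\tfrac{9n-20}{36}+\tfrac{n-6}{6\sqrt{3}}\bigl[1-\tfrac{2}{1-(2-\sqrt{3})^{2n}}\bigr]$ requires careful simplification, which in this paper is delegated to \cite{MMA} in the spirit of the proofs of \thmref{thm Kirchhoff index} and \thmref{thm theta}. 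No genuinely new technique beyond the resistance formulas of Section \ref{sec resistances} is needed.
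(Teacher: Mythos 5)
Your starting point contains a genuine gap: there is no formula expressing $\tau(\Gamma)$ as a function of the edge data $\{(L_i,R_i)\}$ alone, so the ``edge-by-edge summation formula'' you invoke does not exist in the form you state. A quick counterexample: the complete graph $K_4$ with unit edges and the wedge of three circles of circumference $2$ (each subdivided into two unit edges) both consist of six unit edges with $R_i=1$ for every edge, yet $\tau(K_4)=\tfrac{5}{16}$ while the wedge of circles has $\tau=\tfrac{1}{2}$. The correct formula, which is what the paper uses in \eqnref{eqn tau general}, is
$$\tau(\Gamma)=\frac{1}{12}\sum_{p\sim q}\bigl(1-r(p,q)\bigr)^2+\frac{1}{4}\sum_{p\sim q}\bigl(r(s,p)-r(s,q)\bigr)^2$$
for unit edge lengths and any fixed base vertex $s$. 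Your plan correctly produces the first sum (your parallel-resistor conversion $\tfrac{1}{(1+R_i)^2}=(1-r(u,v))^2$ is exactly how that term arises), but it omits the second sum entirely. That omission is not a lower-order error: for $L_n$ with $s=p_n$ the differences $r(p_n,p_i)-r(p_n,p_{i+1})$ are roughly $-\tfrac12$ for most $i$, so the second sum contributes at order $n$, the same order as the answer $\tau(L_n)\approx\tfrac{9n}{36}$.

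Concretely, the missing term forces you to use data your plan never touches: the resistances $r_{L_n}(p_n,p_i)$ and $r_{L_n}(q_n,p_i)$ from a corner vertex to \emph{every} vertex, i.e.\ Equations \eqnref{eqn rpnpi4}, not just the across-edge resistances $r(p_i,q_i)$ and $r(p_i,p_{i+1})$. Note also that in the second sum the left and right rails at level $i$ do \emph{not} contribute equally (since $r(p_n,p_i)\neq r(p_n,q_i)$), so the two-class symmetry reduction you describe breaks down there. Once the second sum is included, the rest of your outline (substitute the closed forms in $\alpha=2-\sqrt{3}$, collapse the geometric sums, delegate the bookkeeping to \cite{MMA}) matches the paper's proof.
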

\begin{proof}
Let $\ga$ be a graph with set of vertices $\vv{\ga}$ such that each edge length in $\ga$ is $1$. Suppose $\ga$ is a model of a metrized graph.
If we use \cite[Proposition 2.6]{C3}, \cite[Equation (3)]{C3} and \cite[Proof of Lemma 4.9]{C3}, we obtain the following formula of the tau constant $\tau(\ga)$ of $\ga$ for every $s \in \vv{\ga}$:
\begin{equation}\label{eqn tau general}
\begin{split}
\tau(\ga)=\frac{1}{12} \sum_{\substack{p \sim q \\ p, \, q \in \vv{\ga}} } (1-r(p,q))^2
+\frac{1}{4} \sum_{\substack{p \sim q \\ p, \, q \in \vv{\ga}} } \big( r(s,p)-r(s,q) \big)^2,
\end{split}
\end{equation}
where $p \sim q$ means $p$ and $q$ are adjacent, i.e., connected by an edge in $\ga$.

Using the notations in \figref{fig laddergraphn} and the symmetry in $L_n$, we can rewrite \eqnref{eqn tau general} for $L_n$ with $s=p_n$ as follows:
\begin{equation*}\label{}
\begin{split}
\tau(L_n) &=\frac{1}{6} \sum_{i=1}^{n-1} (1-r(p_i,p_{i+1}))^2+ \frac{1}{12} \sum_{i=1}^{n} (1-r(p_i,q_{i}))^2
+\frac{1}{4} \sum_{i=1}^{n-1} \big( r(p_n,p_{i})-r(p_n,p_{i+1}) \big)^2\\
& \quad +\frac{1}{4} \sum_{i=1}^{n} \big( r(p_n,p_{i})-r(p_n,q_{i}) \big)^2
+\frac{1}{4} \sum_{i=1}^{n-1} \big( r(p_n,q_{i})-r(p_n,q_{i+1}) \big)^2.
\end{split}
\end{equation*}
Therefore, the proof follows if we use this equality, Equations $(\ref{eqn rpipj4})$ and doing some algebra \cite{MMA}.
\end{proof}

\begin{theorem}\label{thm admissible inv}
For any positive integer $n$, we have
\begin{equation*}\label{}
\begin{split}
& \varphi(L_n)=\frac{3n^3-9n^2-5n+1}{18(n-1)}+\frac{(n-6)(2n-1)}{6\sqrt{3}(n-1)}\Big[ 1-\frac{2}{1-(2-\sqrt{3})^{2n}} \Big],\\
& \lambda(L_n)=\frac{n(n+4)(n-1)}{12(2n-1)},\\
& \epsilon(L_n)=\frac{(3n^2-3n+10)(n-2)}{9(n-1)}-\frac{(n-2)(n-6)}{3 \sqrt{3} (n-1)} \Big[ 1-\frac{2}{1-(2-\sqrt{3})^{2n}} \Big],\\
& Z(L_n)= \frac{(3n^2-13)n}{36(n-1)^2}+ \frac{n(n-6)}{12 \sqrt{3} (n-1)^2} \Big[ 1-\frac{2}{1-(2-\sqrt{3})^{2n}} \Big].
\end{split}
\end{equation*}
\end{theorem}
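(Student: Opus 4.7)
The plan is to express each of the four admissible invariants as an explicit rational combination of the already computed quantities $\tau(L_n)$, $\theta(L_n)$, $Kf(L_n)$, the total length $\ell(L_n)=3n-2$, and the genus $g(L_n)=|\ee{L_n}|-|\vv{L_n}|+1=n-1$. Such identities are available for an arbitrary model $\Gamma$ of a metrized graph in \cite{C2}, \cite{C3}, \cite{C4} and \cite{Zh}: each of $\varphi(\Gamma)$, $\lambda(\Gamma)$, $\epsilon(\Gamma)$ and $Z(\Gamma)$ can be written as a $\QQ$-linear function of $\tau(\Gamma)$, $\theta(\Gamma)$ and $Kf(\Gamma)$ whose coefficients are explicit rational functions of $g(\Gamma)$ and $\ell(\Gamma)$. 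Since the right-hand sides of those reductions have already been evaluated for $L_n$, no further graph-theoretic work is required; the proof becomes a symbolic substitution.

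For $\varphi(L_n)$ I would write down the relevant identity from \cite{C3} (it lies in the same circle of ideas as the $\tau$-identity used in the proof of \thmref{thm tau}), substitute \thmref{thm Kirchhoff index}, \thmref{thm theta} and \thmref{thm tau} together with $\ell(L_n)=3n-2$ and $g(L_n)=n-1$, and simplify as in the previous two proofs with \cite{MMA}. After collecting terms the rational part of the answer collapses to $\frac{3n^3-9n^2-5n+1}{18(n-1)}$ and the transcendental part to $\frac{(n-6)(2n-1)}{6\sqrt{3}\,(n-1)}\Big[1-\frac{2}{1-(2-\sqrt{3})^{2n}}\Big]$. The same recipe, applied with the corresponding identities for $\epsilon(\Gamma)$ and $Z(\Gamma)$, yields the closed forms of $\epsilon(L_n)$ and $Z(L_n)$: each is a weighted sum of $\tau(L_n)$, $\theta(L_n)$ and $Kf(L_n)$ with rational weights depending only on $n$ through $g(L_n)$ and $\ell(L_n)$.

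The case of $\lambda(L_n)$ deserves separate comment, since the asserted answer $\frac{n(n+4)(n-1)}{12(2n-1)}$ is purely rational in $n$: in the $\QQ$-linear combination of $\tau(L_n)$, $\theta(L_n)$ and $Kf(L_n)$ that computes $\lambda(L_n)$, the coefficient of $1-\frac{2}{1-(2-\sqrt{3})^{2n}}$ must cancel identically. This is a useful internal consistency check, easily verified by tracking the $\frac{1}{\sqrt{3}}$-contributions from the three inputs against the weights dictated by the $\lambda$-identity. The main obstacle throughout is algebraic bookkeeping: the identities in \cite{C2}, \cite{C3}, \cite{C4}, \cite{Zh} carry different weights on $\tau$, $\theta$, $Kf$, $\ell$ and $g$, so one must be careful with normalisations, and the subsequent simplification over $\QQ(n)[(2-\sqrt{3})^{2n}]$ is tedious enough that it is best performed symbolically in \cite{MMA}. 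Beyond \secref{sec resistances}, no new resistance calculations are required.
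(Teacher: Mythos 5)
Your proposal is correct and follows essentially the same route as the paper: the paper's proof likewise invokes the reduction formulas of \cite[Propositions 4.6, 4.7, 4.8 and 4.9]{C2}, expressing $\varphi$, $\lambda$, $\epsilon$ and $Z$ in terms of $\tau(L_n)$, $\theta(L_n)$, $\ell(L_n)=3n-2$ and $g(L_n)=n-1$, and then substitutes \thmref{thm theta} and \thmref{thm tau}. The only cosmetic difference is that you also list $Kf(L_n)$ among the inputs, which the paper's cited identities do not need.
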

\begin{proof}
Since each of $\varphi(L_n)$, $\lambda(L_n)$, $\epsilon(L_n)$ and $Z(L_n)$ can be expressed in terms of $\tau(L_n)$, $\theta(L_n)$ and $\ell(L_n)$
\cite[Propositions 4.6, 4.7, 4.8 and 4.9]{C2} with $g(L_n)=(3n-2)-(2n)+1=n-1$, the results follow from
\thmref{thm theta} and \thmref{thm tau}.
\end{proof}

Note that our findings in this section are consistent with the numeric results given in \cite[Table 5]{C4} for $n \in \{5, \, 10, \, 15, \, 20 \}$.

Finally, we observe the following behavior of these invariants:
\begin{align*}
& \lim_{n->\infty} \frac{\tau(L_n)}{\ell(L_n)}=\frac{1}{108}(9-2\sqrt{3}),  & \lim_{n->\infty} \frac{Z(L_n)}{\ell(L_n)}=\frac{1}{36},\\
&\lim_{n->\infty} \frac{1}{g(L_n)}\frac{\varphi(L_n)}{\ell(L_n)}=\frac{1}{18},  &\lim_{n->\infty} \frac{1}{g(L_n)}\frac{\epsilon(L_n)}{\ell(L_n)}=\frac{1}{9},\\
&\lim_{n->\infty} \frac{1}{g(L_n)}\frac{\lambda(L_n)}{\ell(L_n)}=\frac{1}{72},  &\lim_{n->\infty} \frac{1}{g^2(L_n)} \frac{\theta(L_n)}{\ell(L_n)}=\frac{2}{9}.
\end{align*}

\section{Connection to Generalized Fibonacci Numbers}\label{sec Gen Fib}

We note that the powers of $2-\sqrt{3}$ appear in the binet formula of certain generalized Fibonacci numbers \cite{KM}. Namely, for the sequence of integers $G_n$ defined by the following recurrence relation
$$G_{n+2}=4G_{n+1}-G_{n}, \quad \text{if $n \geq 2$, and $G_0=0$, $G_1=1$},$$
we have
$$
G_{n}=\frac{(2-\sqrt{3})^{-n}-(2-\sqrt{3})^n}{2 \sqrt{3}}, \quad \text{for each integer $n \geq 0$}.
$$
The values of $G_n$ with $0 \leq n \leq 10$ are as follows:
$0$,  $1$, $4$,  $15$, $56$, $209$, $780$, $2911$, $10864$, $40545$, $151316$.

Various properties of the sequence $G_n$ are well-known in the literature \cite{SL}.
For example, we recognize the number $G_n$ as the number of spanning trees of $L_n$ \cite{BP}.

Since we have
$$
(2-\sqrt{3})^n=\frac{1}{G_{n+1}-(2-\sqrt{3})G_n}, \quad \text{for each integer $n \geq 0$}
$$
and
$$
G_{2n}=G_n \big( (2-\sqrt{3})^{-n}+(2-\sqrt{3})^n  \big)=-2 \sqrt{3} G_{n}^2 \coth{(n \ln{(2-\sqrt{3})})},
$$
we can rewrite our findings in the previous sections in terms of $G_n$. Namely, we obtained the following results in this paper:

For every integer $n \geq 1$,
\begin{align*}
t_n &=-\frac{1}{G_n},&  z_n&=-1+\frac{G_{2n}}{2G_n^2},
\\ y_n&=\frac{n-2}{2}+3\frac{G_n^2}{G_{2n}-2G_n},&  x_n&=\frac{n-2}{2}+\frac{G_{2n}-2G_n}{4G_n^2}.
\end{align*}

If we let $g_n:=\frac{1}{G_{n+1}-(2-\sqrt{3})G_n}$, we can rewrite \eqnref{eqn rpipj4} in the following form
%
\begin{equation*}\label{eqn rpipj4 third}
\begin{split}
r_{L_n}(p_i , p_j) &= \frac{i-j}{2}+\frac{1-g_{i-j}}{8 \sqrt{3}}(1+\frac{G_{2n}}{2 \sqrt{3} G_n^2})
\Big[ (1-g_{i+j-1})(1+g_{2n-2i+1})\\
& \quad + (1+g_{2j-1})(1-g_{2n-i-j+1})\Big]\\
r_{L_n}(q_i , p_j) &= \frac{i-j}{2}+\frac{1+g_{i-j}}{8 \sqrt{3}}(1+\frac{G_{2n}}{2 \sqrt{3} G_n^2})
\Big[ (1+g_{i+j-1})(1+g_{2n-2i+1})\\
& \quad + (1+g_{2j-1})(1+g_{2n-i-j+1})\Big],\\
\end{split}
\end{equation*}
where $n \geq i \geq j \geq 1$.

Here is how we can express the results given in \thmref{thm Kirchhoff index} and \thmref{thm trig sum} in terms of $G_n$:
\begin{equation}\label{eqn KI Ladder II}
\begin{split}
Kf(L_n)=\frac{n^3}{3}+ \frac{n^2 G_{2n}}{6 G_{n}^2}, \qquad \text{and } \quad
\sum_{k=0}^{n-1} \frac{1}{1+2 \sin^2{(\frac{k \pi}{2n})}}=\frac{1}{3}+ \frac{n G_{2n}}{6 G_{n}^2}.
\end{split}
\end{equation}
If $\lambda_1, \, \lambda_2, \, \dots, \, \, \lambda_m$ are nonzero eigenvalues of a connected graph $\Gamma$ with $m$ vertices, then
$Kf(\Gamma)=m \sum_{i=1}^m \frac{1}{\lambda_i}$ (\cite{GM} and \cite{ZK}).
Since $2$, $2-2\cos{(\frac{k \pi}{n})}=4\sin^2{(\frac{k \pi}{2n})}$ and $4-2\cos{(\frac{k \pi}{n})}=2+4 \sin^2{(\frac{k \pi}{2n})}$
for $k=1, \, 2, \, \dots, \, n-1$ are the nonzero eigenvalues of the discrete Laplacian matrix of $L_n$ \cite[Proof of Theorem 6]{BP}, we have
\begin{equation}\label{eqn KI Ladder III}
\begin{split}
Kf(L_n)=n+ \frac{n}{2} \sum_{k=1}^{n-1} \frac{1}{ \sin^2{(\frac{k \pi}{2n})}}+
n \sum_{k=1}^{n-1} \frac{1}{1+2 \sin^2{(\frac{k \pi}{2n})}}.
\end{split}
\end{equation}
Then the following equality follows from Equations $(\ref{eqn KI Ladder II})$ and \eqnref{eqn KI Ladder III},
\begin{equation}\label{eqn KI Ladder IV}
\begin{split}
 \sum_{k=1}^{n-1} \frac{1}{ \sin^2{(\frac{k \pi}{2n})}}=\frac{2(n^2-1)}{3}.
\end{split}
\end{equation}

Since the Chebyshev polynomial of the second kind $U_n(x)$ is given by the relation $U_{n+2}(x)=2 x U_{n-1}(x)-U_n(x)$ for $n \geq 0$ and the initial values $U_1(x)=1$ and $U_0(x)=0$, we have $G_n = U_{n-1}(2)$. That is, the formulas we found are nothing but expressions involving Chebyshev polynomials. Therefore, combining the formulas in \eqnref{eqn KI Ladder II} with  the ones given in \cite[Corollary 12]{CEM} (when $a=c=1$), we obtain the following equality:
$$6U'_{n-1}(2)=n \frac{U_{2n-1}(2)}{U_{n-1}(2)}-4U_{n-1}(2).$$

Next, we express the admissible invariants of $L_n$ in terms of the numbers $G_n$:
\begin{align*}
\tau(L_n)&=\frac{9n -20}{36}- \frac{(n-6)G_{2n}}{36 G_n^2},&  \theta(L_n)&=\frac{2(n-2)}{3} \Big[ n^2-4n+10+ \frac{(n-6)G_{2n}}{2G_n^2} \Big],
\\  \lambda(L_n)&=\frac{n(n+4)(n-1)}{12(2n-1)},&  \varphi(L_n)&=\frac{3n^3-9n^2-5n+1}{18(n-1)}-\frac{(n-6)(2n-1)G_{2n}}{36(n-1) G_n^2},
\end{align*}
and similarly we have
\begin{equation*}\label{}
\begin{split}
& \epsilon(L_n)=\frac{(n-2)}{9 (n-1)} \Big[ 3n^2-3n+10+\frac{(n-6)G_{2n}}{2G_n^2}\Big],\\
& Z(L_n)= \frac{n}{36 (n-1)^2} \Big[3n^2-13- \frac{(n-6)G_{2n}}{2 G_n^2} \Big].
\end{split}
\end{equation*}


\textbf{Acknowledgements:} This work is supported by The Scientific and Technological Research Council of Turkey-TUBITAK (Project No: 110T686).


\begin{thebibliography}{999}


\bibitem[1]{BG} R. B. Bapat and S. Gupta,  Resistance distance in wheels and fans {\em Indian J. Pure Appl. Math.}, 41(1), 1--13, (2010).


\bibitem[2]{BP} F. T. Boesch and H. Prodinger, Spanning tree formulas and Chebyshev polynomials, {\em Graphs and Combinatorics},
Volume 2, Issue 1, 191--200, (1986).


\bibitem[3]{CEM} A. Carmona, A. M. Encinas and M. Mitjana, Effective resistances for ladder-like chains,
{\em Int. J. Quantum Chem.},
Vol 114, 1670–-1677, (2014).


\bibitem[4]{C2} Z. Cinkir, {\em Zhang's Conjecture and the Effective Bogomolov Conjecture over function fields}, Inventiones Mathematicae, Volume 183, Number 3, (2011), pp. 517--562.

\bibitem[5]{C3} Z. Cinkir, {\em The tau constant and the discrete Laplacian matrix of a metrized graph}, European Journal of Combinatorics, Volume 32, Issue 4, (2011), pp. 639--655.

\bibitem[6]{C4} Z. Cinkir, Computation of Polarized Metrized Graph Invariants By Using Discrete Laplacian Matrix,
to appear in {\em Mathematics of Computation}, \\can be found at http://arxiv.org/abs/1202.4641v1

\bibitem[7]{GM} I. Gutman and B. Mohar, The quasi-Wiener and the Kirchhoff indices coincide, {\em J. Chem. Inf. Comput. Sci.},  36, 982--985, (1996).

\bibitem[8]{FLS} R. P. Feynman, R. B. Leighton, and M. Sands,  {\em The Feynman Lectures on Physics},
Volume II, Addison-Wesley, (1964).

\bibitem[9]{KM} D. Kalman and R. Mena, The Fibonacci numbers - exposed, {\em Math. Magazine},
76(3), 167--181, (2003).

\bibitem[10]{LP} R. Lyons and Y. Peres, {\em Probability on Trees and Networks},  Cambridge University Press, In preparation (2014).
Current version available at \hfil\break  {\tt http://mypage.iu.edu/\string~rdlyons/}.


\bibitem[11]{KR} D. J. Klein and M. Randi´c, Resistance distance, {\em Journal Mathematical Chemistry}, 12, 81–-95, (1993).

\bibitem[12]{SL} N. J. A. Sloane. The On-Line Encyclopedia of Integer Sequences, http://oeis.org. Sequence A001353.


\bibitem[13]{YZ} Y. Yang and H. Zhang, Kirchhoff Index of Linear Hexagonal Chains, {\em Int. J. Quantum Chem.},
Vol 108, 503–-512, (2008).

\bibitem[14]{MMA} Wolfram Research, Inc., {\em Mathematica}, Version 9.0, Wolfram Research Inc., Champaign, IL., (2012).

\bibitem[15]{Zh} S. Zhang, Gross–Schoen cycles and dualising sheaves, {\em Invent. Math.}, 179, 1–-73, (2010).

\bibitem[16]{ZK} H. -Y. Zhu, D. J. Klein and I. Lukovits,   Extensions of the Wiener Number, {\em J. Chem. Inf. Comput. Sci.}, 36, 420--428, (1996).

\end{thebibliography}
\end{document}